\documentclass[a4paper,11pt]{amsart}

%For the alphabetic ordering in the references
%\newcommand{\Canda}[0]{\v{C}anda}

\RequirePackage[english]{babel}
\usepackage{amsthm}

%\addtolength{\headheight}{1.14998pt}
%To fix silly bug in 12pt option

%Good for binding
\addtolength{\oddsidemargin}{-.5cm}
\addtolength{\evensidemargin}{-1.5cm}
\addtolength{\textwidth}{2cm}

\newcommand{\Set}[1]{\left\{\, #1 \,\right\}}

\DeclareMathOperator{\Sym}{Sym}
\DeclareMathOperator{\Alt}{Alt}
\DeclareMathOperator{\Aff}{AGL}
\DeclareMathOperator{\GL}{GL}
\DeclareMathOperator{\AGL}{AGL}
\DeclareMathOperator{\T}{T}
\DeclareMathOperator{\GF}{\mathbf{GF}}

\renewcommand{\phi}[0]{\varphi}
\renewcommand{\theta}[0]{\vartheta}
\renewcommand{\epsilon}[0]{\varepsilon}

\newcommand{\N}{\text{$\mathbf{N}$}}
\newcommand{\MM}{\mathcal{M}}

\theoremstyle{definition}

\newtheorem{dummy}{Dummy}
\numberwithin{dummy}{section}
\numberwithin{equation}{section}

\newtheorem{theorem}[dummy]{Theorem}
\newtheorem{lemma}[dummy]{Lemma}

\newtheorem{cor}[dummy]{Corollary}
\newtheorem{conjecture}[dummy]{Conjecture}
\newtheorem{proposition}[dummy]{Proposition}

\theoremstyle{definition}

\newtheorem{defin}[dummy]{Definition}

\theoremstyle{remark}

\newtheorem{fact}{Fact}

\newtheorem{remark}[dummy]{Remark}

\newcommand{\FF}{{\mathbb F}}

\begin{document}

\date{June 2008 --- Version 6.06%
}

\title[Ciphers and Imprimitivity]
{On some block ciphers and imprimitive groups}

\author{A. Caranti}

\address[A. Caranti]{Dipartimento di Matematica\\
  Universit\`a degli Studi di Trento\\
  via Sommarive 14\\
  I-38100 Povo (Trento)\\
  Italy} 

\email{caranti@science.unitn.it} 

\urladdr{http://www-math.science.unitn.it/\~{ }caranti/}

\author{F. Dalla Volta}

\address[F.~Dalla Volta]{Dipartimento di Matematica e Applicazioni\\
  Edificio U5\\
  Universit\`a degli Studi di Milano--Bicocca\\
  Via R.~Cozzi, 53\\
  I-20126 Milano\\
  Italy}

\email{francesca.dallavolta@unimib.it}

\urladdr{http://www.matapp.unimib.it/\~{ }dallavolta/}

\author{M. Sala}

\address[M. ~Sala]{Dipartimento di Matematica\\
  Universit\`a degli Studi di Trento}

\email{msala@bcri.ucc.ie}

\begin{abstract}
The group generated by the round functions of a block ciphers is a widely
investigated problem. We identify a large class of block ciphers
for which such group is easily guaranteed to be primitive. Our class
includes the AES and the SERPENT.
\end{abstract}

\keywords{AES, key-alternating block ciphers, primitive groups}

\thanks{Caranti   and  Dalla  Volta   are  members   of  INdAM-GNSAGA,
 Italy. Caranti  has been partially  supported by MIUR-Italy  via PRIN
 2001012275 ``Graded  Lie algebras  and pro-p-groups of  finite width,
 loop  algebras, and  derivations''.  Dalla  Volta has  been partially
 supported    by   MIUR-Italy    via   PRIN    ``Group    theory   and
 applications''.    Sala    has    been   partially    supported    by
 STMicroelectronics contract ``Complexity  issues in algebraic Coding
 Theory and Cryptography''}

\maketitle

\section{Introduction}

Most block ciphers are iterated block ciphers, i.e. they are obtained
by the composition of several ``rounds'' (or ``round functions'').
A round is a key-dependent permutation of the message/cipher space.
To achieve efficiency, all rounds share a similar structure.

For a given cipher, it is an interesting problem to determine the permutation
group generated by its round functions (with the key varying in the key space),
since this group might reveal weaknesses of the cipher.
However, these results usually require an ad-hoc proof 
(with a notable recent exception \cite{SparrWerns}).

In this paper we consider a class of block ciphers, large enough to contain
some well-known ciphers (like the AES and the SERPENT), which is such that
the primitivity of the related group can be easily established 
by only checking some properties of its S-Boxes.
Our results may be useful to cipher designers wanting to avert
group imprimitivity, since in our context they would do it easily.

%\subsection*{Acknowledgements}

%We  are grateful to  P.~Fitzpatrick and  C.~Traverso for  their useful
%comments. Part of this work has been presented at the Workshop on
%Coding and Cryptography which was held at BCRI, UC Cork in 2005.

\section{Preliminaries}
\label{sec:prelim}

\subsection{Group theory and finite field theory}

Let $G$ be a finite group acting transitively on a set $V$ and $H\leq G$ 
a subgroup. 
We write the action of an element $g \in G$ on an element $\alpha \in V$
as $\alpha g$. 
Also, $\alpha G = \Set{ \alpha g : g \in G}$ is the orbit of $\alpha$
and $G_{\alpha} = \Set {g \in G : \alpha g =\alpha }$ is its stabilizer.
A  \emph{partition} $\mathcal{B}$ of $V$ is \emph{$G$-invariant} 
if for any $B  \in \mathcal{B}$ and $g \in
G$,  one  has  $B   g  \in  \mathcal{B}$. 
Partition $\mathcal{B}$ is \emph{trivial} if $\mathcal{B}  = \Set{ V
}$ or $\mathcal{B} = \Set{ \Set{\alpha} : \alpha \in V}$.
If $\mathcal{B}$ is non-trivial then it is a \emph{block system} 
for the action of $G$ on $V$ (and any $B\in {\mathcal B}$ is a {\em{block}}).
 If such a block system exists,
then we say that $G$ is \emph{imprimitive} in its action on $V$
(equivalently, \emph{$G$ acts imprimitively} on $V$).
If $G$ is not imprimitive (and it is transitive), then we
say that it is \emph{primitive}.
Since $G$ acts transitively on $V$, we have then $\mathcal{B}
= \Set{ B g : g \in G }$.
\begin{lemma}[\cite{Cam}, Theorem~1.7]
\label{lemma}
Let $G$ be a finite group, acting transitively on a set $V$.
Let $\alpha \in V$.
Then the blocks $B$ containing $\alpha$ are in one-to-one correspondence
with the subgroups $H$ such that with $G_{\alpha} < H < G$. The correspondence
is given by $B = \alpha H$.

In particular, $G$ is primitive if and only if $G_{\alpha}$ is a
maximal subgroup of $G$.
\end{lemma}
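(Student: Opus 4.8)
The plan is to exhibit explicitly, in both directions, a bijection between the blocks through $\alpha$ and the subgroups $H$ with $G_{\alpha} < H < G$, and then to deduce the primitivity statement as a corollary. First, given a subgroup $H$ with $G_{\alpha} \leq H \leq G$, I would set $B = \alpha H$, so that $\alpha \in B$ because $1 \in H$, and check that $B$ occurs as a part of a $G$-invariant partition. Since $G$ is transitive, the translates $\Set{ B g : g \in G}$ cover $V$ (as $\alpha \in B$ gives $\alpha g \in B g$, and $\alpha G = V$); and if $B g_{1}$ and $B g_{2}$ meet, say $\alpha h_{1} g_{1} = \alpha h_{2} g_{2}$ with $h_{i} \in H$, then $h_{1} g_{1} g_{2}^{-1} h_{2}^{-1}$ fixes $\alpha$, hence lies in $G_{\alpha} \leq H$, whence $g_{1} g_{2}^{-1} \in H$ and $B g_{1} = B g_{2}$. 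Thus $\mathcal{B} = \Set{ B g : g \in G}$ is a $G$-invariant partition with $\alpha \in B \in \mathcal{B}$. Comparing cardinalities --- using that the stabiliser of $\alpha$ in $H$ is $H \cap G_{\alpha} = G_{\alpha}$, so $\Order{B} = \Order{H}/\Order{G_{\alpha}}$, while $\Order{V} = \Order{G}/\Order{G_{\alpha}}$ by transitivity --- one sees that $B = \Set{\alpha}$ exactly when $H = G_{\alpha}$ and $B = V$ exactly when $H = G$; so the strict chain $G_{\alpha} < H < G$ corresponds precisely to a nontrivial $B$, i.e.\ to a genuine block.

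Conversely, given a block $B$ with $\alpha \in B$, I would take $H$ to be the setwise stabiliser $\Set{ g \in G : B g = B}$. Because $G$ is transitive, $\Set{ B g : g \in G }$ is a block system, and in such a partition the part through a given point is unique; hence if $g \in G_{\alpha}$ then $\alpha \in B \cap B g$ forces $B g = B$, so $G_{\alpha} \leq H$. Moreover $\alpha H \subseteq B$ since $H$ fixes $B$ setwise and $\alpha \in B$, and for the reverse inclusion I would take $\beta \in B$, write $\beta = \alpha g$ by transitivity, note $\alpha g \in B \cap B g$, conclude $B g = B$, $g \in H$, and $\beta \in \alpha H$; so $B = \alpha H$. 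It then remains to see that the two assignments invert each other: for a block $B$ this is exactly the identity $B = \alpha H$ just proved, while for a subgroup $H \geq G_{\alpha}$ one has $H \leq G_{\alpha H}$ trivially, and if $g$ stabilises $\alpha H$ then $\alpha g = \alpha h$ for some $h \in H$, so $g h^{-1} \in G_{\alpha} \leq H$ and $g \in H$ --- this is the one place where the hypothesis $G_{\alpha} \leq H$ is genuinely needed, and where a correspondence with arbitrary subgroups would fail.

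Finally, for the ``in particular'' clause I would argue through imprimitivity: $G$ (which is transitive by hypothesis) is imprimitive if and only if it admits a block system, and by transitivity every block system contains exactly one part through $\alpha$, while conversely any block through $\alpha$ spawns the block system $\Set{ B g : g \in G}$; so $G$ is imprimitive if and only if there is a block through $\alpha$, which by the bijection above happens if and only if there is a subgroup $H$ with $G_{\alpha} < H < G$. Negating, $G$ is primitive if and only if $G_{\alpha}$ is a maximal subgroup of $G$. I do not expect any serious obstacle: the whole argument is a chain of routine orbit--stabiliser manipulations, and the only real care needed is the bookkeeping to confirm that the two maps are mutually inverse and that the strict inclusions $G_{\alpha} < H < G$ match up with the requirement that $B$ be a genuine (nontrivial) block.
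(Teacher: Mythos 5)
Your proof is correct. The paper does not prove this lemma at all --- it is quoted directly from Cameron's \emph{Permutation Groups} (Theorem~1.7) --- and your argument is the standard one found there: the orbit $\alpha H$ construction in one direction, the setwise stabiliser in the other, the orbit--stabiliser count to match the strict inclusions $G_{\alpha} < H < G$ with nontriviality of the block, and the verification that the two maps are mutually inverse (correctly isolating where $G_{\alpha} \leq H$ is needed).
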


%We will need  a fact from the basic theory of  finite fields. (See for
%instance~\cite{Jac}~or \cite{LN}.)  Write $\GF(p^{n})$ for  the finite
%field with $p^{n}$ elements, $p$ a prime.
%\begin{lemma}
%  \label{lemma:ff}
%  $\GF(p^{n}) \subseteq \GF(p^{m})$ if and only if $n$ divides $m$.
%\end{lemma}

%In the rest of the paper, we tend to adopt the notation of~\cite{AES}.

We denote by $\Sym(V)$ and $\Alt(V)$, respectively, the symmetric and 
alternating group on $V$.
When $V$ is a vector space over a finite field $\FF_q$ with $q$ elements,
we also denote by $\T(V)$ the translation group
$\T(V)=\Set{\sigma_{v} : v \in V}$, where 
  $\quad \sigma_{v} : V \to V,\quad w\mapsto w + v \,$.
It is well-known that $\T(V)$ is a transitive
%elementary abelian, regular 
subgroup of $\Sym(V)$, which is imprimitive except for the trivial case
$V=\FF_p$, with $p$ a prime.
Any block system ${\mathcal B}$ of $\T(V)$ is the set of
translates of a proper vector subspace $W$ of $V$, that is, 
${\mathcal{B}}=\{W+v \mid v\in V\}$.
We denote by $\Aff(V)$ the group of all affine permutations
of $V$, which is a primitive maximal subgroup of $\Sym(V)$, and
by $\GL(V)$ the group of all linear permutations of $V$,
which is a normal subgroup of $\Aff(V)$.

We will need the following result from finite field theory.
\begin{theorem}[\cite{Matt},\cite{Others}]
\label{theorem:Sandro}
  Let $\FF$ be a field of characteristic two. Suppose  $U \ne 0$ is an
  additive subgroup of $\FF$ which contains the inverses of each of its
  nonzero elements. Then $U$ is a subfield of $\FF$.
\end{theorem}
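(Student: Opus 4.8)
The plan is to upgrade the given additive group to a field by establishing the three missing ingredients: that $U$ contains a multiplicative identity, that $U$ is closed under multiplication, and that it is closed under inversion (the last being the hypothesis). Since $\FF$ has characteristic two, $U$ is automatically an $\FF_2$-subspace, so the entire problem is arithmetic: the idea is to manufacture products out of the only two operations we control, namely addition and the inversion $x \mapsto x^{-1}$.

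First I would establish the central identity: for all nonzero $a,b \in U$ one has $a^{2}b \in U$. To see this, take nonzero $a,b \in U$ with $a \ne b$; then $a+b \in U$ is nonzero (characteristic two), so $a^{-1}$ and $(a+b)^{-1}$ lie in $U$, and hence so does their sum
\[
a^{-1} + (a+b)^{-1} = \frac{(a+b)+a}{a(a+b)} = \frac{b}{a(a+b)}.
\]
This is nonzero, so its inverse $a(a+b)/b = a^{2}/b + a$ lies in $U$; subtracting $a \in U$ leaves $a^{2}/b \in U$. The case $a=b$ is trivial, so $a^{2}/b \in U$ for all nonzero $a,b$, and replacing $b$ by $b^{-1} \in U$ gives $a^{2}b \in U$. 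Feeding this identity back into itself shows that every odd power of a nonzero element of $U$ again lies in $U$.

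The final step is to produce $1$ and to close up multiplication, and this is where the real work lies. The clean mechanism is the multiplicative order: if a nonzero $a \in U$ has odd multiplicative order $m$, then $1 = a^{m}$ is an odd power of $a$ and is therefore in $U$ by the previous step. Once $1 \in U$, the central identity with $b=1$ gives $a^{2} \in U$, so $U$ is closed under squaring; together with $a^{-1} \in U$ and an induction, all powers of $a$ lie in $U$, and in particular $a^{(m+1)/2}$ is a square root of $a$ inside $U$, so $U$ is closed under extraction of square roots. Finally, for nonzero $a,b \in U$ the element $a^{2}b \in U$ has square root $a\sqrt{b} \in U$; replacing $b$ by $b^{2} \in U$ yields $a\sqrt{b^{2}} = ab \in U$, which is multiplicative closure.

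The point I expect to be the main obstacle is exactly the input that every nonzero element of $U$ has odd (hence finite) multiplicative order, equivalently that $\FF$ is algebraic over its prime field $\FF_2$; this is what lets us harvest $1$ and square roots from the odd powers produced by the central identity, and it holds automatically in the finite fields relevant to the applications. Some such arithmetic hypothesis is genuinely necessary and cannot be extracted from the additive and inversion structure alone: in $\FF_2(t)$ the additive subgroup $t\,\FF_2(t^2)$ is closed under inversion yet contains neither $1$ nor $t^{2} = t \cdot t$. Thus the crux of the argument is precisely to exploit the arithmetic of $\FF^{\times}$ at this last stage, the earlier identity being purely formal.
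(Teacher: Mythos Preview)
The paper gives no proof of this theorem; it is simply quoted from \cite{Matt} and \cite{Others}, so there is nothing in-paper to compare your argument against.

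On its own merits your argument is correct and well organised. The key identity $a^{2}b\in U$ (equivalently $a^{2}/b\in U$) is precisely Hua's identity specialised to the commutative setting, and your derivation via $a^{-1}+(a+b)^{-1}$ is the standard one. From there you correctly isolate the only nontrivial step, namely producing $1$: if some nonzero $a\in U$ has finite odd multiplicative order $m$ then $1=a^{m}\in U$, after which closure under squaring, square roots, and finally arbitrary products follow as you indicate. For a \emph{finite} field this hypothesis is automatic, since $\lvert\FF^{\times}\rvert=2^{n}-1$ is odd; and that is the only case the paper actually uses (it is applied to $\GF(2^{8})$ in Corollary~\ref{cor:AES_is_OK}).

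You are also right that the statement, read literally for an arbitrary field of characteristic two, is too strong: your example $U=t\,\FF_{2}(t^{2})$ inside $\FF_{2}(t)$ is a nonzero inversion-closed additive subgroup which contains neither $1$ nor $t^{2}$, hence is not a subfield. Indeed the main theorem of \cite{Others} is stated for characteristic different from~$2$, and in characteristic~$2$ some extra hypothesis is needed; finiteness of $\FF$ certainly suffices, and that is all the present paper requires. So your proposal proves exactly the version of the theorem the paper needs, and in addition pinpoints a slight over-generality in the way the result is quoted.
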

\ 

\subsection{Vectorial Boolean functions}

Let $m\geq 1$ be a natural number. Let $A=(\FF_2)^m$ and $A^*=A\setminus\{0\}$.
Any function $F:A\to A$ is a \emph{vectorial Boolean function} (vBf).\\
For any function $F:A\to A$ and any elements $a,b\in A $, $a\not=0$,
we denote
$$
\delta_F(a,b)  =  |\{x\in A: F(x+a)+F(x)=b\}| \;.
$$
%and
%$$
%\Delta_F  =  \{ \delta_F(a,b): a,b\in V, a\ne 0\}.
%$$
Let $\delta\in \N$.
Function $F$ is called a \emph{differentially $\delta$-uniform} function 
(\cite{Ny}) if
$$
  \forall a\in A^*,\forall b\in A,\qquad
  \delta_F(a,b) \le \delta \;.
$$
The smallest such $\delta$ is called the \emph{differential uniformity}
of $F$.
Note that $\delta\ge 2$ for any vBf. 
Differentially 2-uniform mappings are called \emph{almost perfect nonlinear},
or APN for short.
If we denote by $\hat F_a$ the vBf which maps  
$x \mapsto F(x  + a)   + F(x)$, then $F$ is differential 
$\delta$-uniform if and
only if $|(\hat F_a)^{-1}(b)|\leq \delta$ (for any $a$ and $b$).
From now on, we shorten ``differential uniformity'' to ``uniformity''.

Vectorial Boolean functions used as S-boxes in block ciphers must have 
low uniformity  to prevent differential cryptanalysis 
(see \cite{B-Sh,Ny}).  In this sense, APN  functions are optimal. 
However, numerous experiments suggest the following conjecture
\begin{conjecture}[Dobbertin]
If $m$ is even, no APN function is a permutation.
\end{conjecture}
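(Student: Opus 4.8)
\medskip

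The plan is to attack Dobbertin's conjecture along the standard lines, though I should say at the outset that no complete proof is known and that --- as I shall explain --- the statement is in fact false in its full generality. The natural first move is to record what the two hypotheses ``$F$ is APN'' and ``$F$ is a permutation'' impose on the derivatives $\hat F_a$. APN-ness says that for every $a\in A^*$ each value is hit at most twice by $\hat F_a$; since $\hat F_a(x)=\hat F_a(x+a)$, every value in $\operatorname{Im}(\hat F_a)$ is then hit \emph{exactly} twice, so $\hat F_a$ is $2$-to-$1$ and $\Order{\operatorname{Im}(\hat F_a)}=2^{m-1}$. Requiring in addition that $F$ be a permutation rules out the value $0$, since $\hat F_a(x)=0$ forces $F(x+a)=F(x)$, hence $x+a=x$, hence $a=0$. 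Thus for a bijective APN $F$ each set $\operatorname{Im}(\hat F_a)$ is a subset of $A^*$ of size exactly $2^{m-1}$ \emph{missing the origin}; this avoidance of $0$ is the only extra handle bijectivity gives, and the entire problem is to show that it is incompatible with $m$ being even.

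The second step is to pass to the Walsh domain, where both hypotheses become clean identities. Writing $\mathcal{W}_F(u,v)=\sum_{x\in A}(-1)^{\langle v,F(x)\rangle+\langle u,x\rangle}$, bijectivity of $F$ amounts to $\mathcal{W}_F(0,v)=0$ for every $v\in A^*$ (each component $\langle v,F(\cdot)\rangle$ is balanced), while APN-ness amounts to the fourth-moment identity
\[
  \sum_{u\in A}\ \sum_{v\in A}\ \mathcal{W}_F(u,v)^4 \;=\; 3\cdot 2^{4m}-2^{3m+1},
\]
together with Parseval's relation $\sum_{u\in A}\mathcal{W}_F(u,v)^2=2^{2m}$ for each $v\in A^*$. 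One then wants to play these moment relations off against $2$-adic divisibility properties of the spectrum --- of McEliece type, controlled by the algebraic degree of $F$, or coming from the fact that a permutation and its inverse have the same spectrum --- until one reaches a numerical contradiction for $m$ even. This does succeed in two restricted settings. For a \emph{power map} $F(x)=x^d$ it is pure elementary number theory, with no Fourier analysis at all: for each exponent $d$ known to yield an APN function one checks $\gcd(d,2^m-1)\ne 1$ whenever $m$ is even --- for the Gold exponent $d=2^k+1$ with $\gcd(k,m)=1$ one computes $\gcd(2^k+1,2^m-1)=3$, the inverse exponent $d=2^m-2$ is APN only for $m$ odd, and so on down the list --- so $x^d$ is then not bijective. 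For a \emph{plateaued} $F$, in particular for every quadratic APN function, the moment identity forces $\sum_{v\in A^*}2^{s_v}=2^{m+1}-2$, where $2^{(m+s_v)/2}$ is the single nonzero Walsh amplitude of the component $\langle v,F(\cdot)\rangle$; when $m$ is even each $s_v$ is a non-negative \emph{even} integer, so if $F$ had no bent component (as a permutation must not) every $s_v$ would be $\ge 2$ and the left-hand side would be $\ge 4(2^m-1)>2^{m+1}-2$ --- a contradiction. Hence no plateaued APN function, in particular no quadratic one, is a permutation when $m$ is even.

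The hard part --- indeed, the reason the conjecture is \emph{open} rather than a theorem --- is exactly the passage from such local or spectral constraints to a contradiction for an \emph{arbitrary} $F$, with no control on its degree or on the shape of its spectrum. The divisibility inequalities that settle odd $m$ (and that, conversely, leave room for the APN permutations known to exist there, such as the Gold and the inverse maps) lose precisely one factor of $2$ when $m$ is even, and every moment computation one can write down remains perfectly consistent; I would expect an honest attempt to stall at the point of wanting to claim that the sets $\operatorname{Im}(\hat F_a)$ cannot all simultaneously miss $0$, for which there is no parity obstruction. And this gap cannot be repaired, because the conjecture as stated is false: for $m=6$ an APN permutation genuinely exists --- one was exhibited by Browning, Dillon, McQuistan and Wolfe shortly after the present paper. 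What survives, and what one can realistically hope to prove, is the conjecture restricted to power maps, to plateaued (hence quadratic) functions, and to the small even cases $m\le 4$, together with the heuristic that APN permutations are ``rare'' in even dimension.
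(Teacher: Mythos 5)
The statement you were asked to prove is presented in the paper as a \emph{conjecture}, attributed to Dobbertin and supported only by the remark that ``numerous experiments suggest'' it; the paper contains no proof of it, so there is nothing to compare your argument against on the paper's side. Your assessment is the correct one: you rightly decline to manufacture a proof, you correctly isolate the only leverage that bijectivity adds to APN-ness (each $\hat F_a$ is $2$-to-$1$ with image of size $2^{m-1}$ avoiding $0$), and your two partial results are sound --- the $\gcd$ computation disposing of APN power maps for even $m$, and the fourth-moment/Parseval argument showing $\sum_{v\ne 0}2^{s_v}=2^{m+1}-2$, which for even $m$ forces a bent (hence unbalanced) component of any plateaued APN function, so no quadratic APN function is a permutation in even dimension. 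I checked the constant $3\cdot 2^{4m}-2^{3m+1}$ in your moment identity with the sum taken over all $(u,v)$ including $v=0$, and it is right. Most importantly, your final observation is correct and worth recording against the paper's text: the conjecture as stated is now known to be \emph{false}, since Browning, Dillon, McQuistan and Wolfe exhibited an APN permutation of $(\FF_2)^6$ shortly after this paper was written; it remains open only for even $m\ge 8$ and survives as a theorem in the restricted settings you name. Since the conjecture plays no role in the paper's main results (it is only invoked to motivate the use of $4$-uniform rather than APN S-boxes), this does not affect Theorem \ref{theorem:main} or its corollaries.
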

If this conjecture is true, then APN functions cannot be used as S-Boxes,
since implementation issues require an even $m$.

Any vBf can also be regarded as a polynomial in $\FF_{2^m}[x]$
(with degree at most $2^m-1$).
When $m$ is even,  the {\it patched inverse} function $x^{2^m-2}$ is a
4-uniform permutation  (\cite{Ny}) and was chosen 
as the basic S-box, with $m=8$, in the Advanced Encryption Standard (AES)
(\cite{AES}).
\\

\subsection{Previous results on the group generated by the round functions}

Let ${\mathcal C}$ be any block cipher such that the plain-text space $\MM$
coincides with the cipher space. Let ${\mathcal K}$ be the key space.
Any key $k\in {\mathcal K}$ induces a permutation $\tau_k$ on $\MM$.
Since $\MM$ is usually $V=(\FF_2)^n$ for some $n\in \N$, we can consider
$\tau_k\in \Sym(V)$.
We denote by $\Gamma=\Gamma({\mathcal C})$ the subgroup of $\Sym(V)$ generated
by all the $\tau_k$'s.
In literature the following properties of $\Gamma$ are considered undesirable,
since they could lead to weaknesses of ${\mathcal C}$: small cardinality,
imprimitivity and  intransitivity. For a detailed discussion of their
consequences, see \cite{SparrWerns}. We would add that $\Gamma$ should not be
a subgroup of  $\Aff(V)$, otherwise it is obvious how to break
the cipher.
If $\Gamma$ turns out to be $\Alt(V)$ or $\Sym(V)$, these properties
are automatically avoided.
Note also that primitivity alone guarantees a non-negligible group size, 
but it could still be that $\Gamma$ would be weak 
(as for example if $\Gamma \leq \Aff(V)$).

Unfortunately, the knowledge of $\Gamma({\mathcal C})$ is out of reach
for the most important ciphers (such as the AES, the SERPENT, the DES,
the IDEA). However, researchers have been able to compute another related
group. Suppose that ${\mathcal C}$ is the composition of $l$ rounds.
\begin{remark}
Note that the division into rounds is not mathematically
well-defined, but it is provided in the document describing the cipher,
so this division is debatable and a cryptanalyst is allowed
to modify it, if it is convenient.
\end{remark}
Then any key $k$ would induce $l$ permutations, 
$\tau_{k,1},\ldots,\tau_{k,l}$, whose composition is $\tau_k$.
For any round $h$, we can consider $\Gamma_h({\mathcal C})$ as the 
subgroup of $\Sym(V)$ generated by the $\tau_{k,h}$'s 
(with $k$ varying in $V$).
We can thus define the group $\Gamma_{\infty}=\Gamma_{\infty}({\mathcal C})$
as the subgroup of $\Sym(V)$ generated by all the $\Gamma_h$'s.
We note the following elementary fact.
\begin{fact}
\label{gr}
$
  \Gamma \,\leq\, \Gamma_{\infty} \,. 
$
\end{fact}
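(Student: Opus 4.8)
The plan is to prove the inclusion $\Gamma \leq \Gamma_\infty$ by showing that every generator of $\Gamma$ already lies in $\Gamma_\infty$, which suffices since $\Gamma_\infty$ is a subgroup of $\Sym(V)$. Recall that $\Gamma$ is generated by the permutations $\tau_k$ for $k$ ranging over the key space $\mathcal{K}$, so it is enough to fix an arbitrary $k$ and verify $\tau_k \in \Gamma_\infty$.

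The key observation is the round decomposition: by hypothesis $\mathcal{C}$ is the composition of $l$ rounds, so for each key $k$ we have $\tau_k = \tau_{k,1}\tau_{k,2}\cdots\tau_{k,l}$ (with the appropriate convention for the order of composition, matching the way the action $\alpha g$ was set up in the preliminaries). For each round index $h \in \{1,\dots,l\}$, the permutation $\tau_{k,h}$ is one of the generators of $\Gamma_h(\mathcal{C})$, hence $\tau_{k,h} \in \Gamma_h \leq \Gamma_\infty$, the last inclusion holding because $\Gamma_\infty$ is defined as the subgroup generated by all the $\Gamma_h$'s. Since $\Gamma_\infty$ is closed under composition, the product $\tau_{k,1}\cdots\tau_{k,l}$ lies in $\Gamma_\infty$, i.e. $\tau_k \in \Gamma_\infty$.

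As this holds for every $k \in \mathcal{K}$, all generators of $\Gamma$ belong to $\Gamma_\infty$, and therefore $\Gamma = \langle \tau_k : k \in \mathcal{K}\rangle \leq \Gamma_\infty$, as claimed.

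There is essentially no obstacle here: the statement is labelled an elementary fact, and the only point requiring a modicum of care is bookkeeping — making sure the factorization $\tau_k = \tau_{k,1}\cdots\tau_{k,l}$ is written in the composition order consistent with the paper's conventions, and noting that each round key in the expanded key schedule is determined by $k$ so that $\tau_{k,h}$ is genuinely among the stated generators of $\Gamma_h$. Neither of these needs more than a sentence.
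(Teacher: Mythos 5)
Your proof is correct and is precisely the argument the paper has in mind (the paper omits it entirely, calling the statement an elementary fact): decompose each generator $\tau_k$ as the composition of the round permutations $\tau_{k,1}\cdots\tau_{k,l}$, each of which is a generator of some $\Gamma_h\leq\Gamma_\infty$. Your remarks on composition order and on the round keys being determined by $k$ are exactly the right points of care.
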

Group $\Gamma_{\infty}$ is traditionally called the \emph{group generated
by the round functions}. Note that independent sub-keys are implicitly 
assumed.
We collect in the following proposition some previous
results on $\Gamma_{\infty}$.
\begin{proposition}
\label{wer}
\
\begin{itemize}
\item $\Gamma_{\infty}({\mathrm{AES}})=\Alt(V)$ \cite{WAES},
\item $\Gamma_{\infty}({\mathrm{SERPENT}})=\Alt(V)$ \cite{Wserpent},
\item $\Gamma_{\infty}({\mathrm{DES}})=\Alt(V)$ \cite{WDES}.
\end{itemize}
\end{proposition}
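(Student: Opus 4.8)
Strictly speaking the statement is a compilation of results from \cite{WAES}, \cite{Wserpent} and \cite{WDES}, so ``proving'' it means recalling the common strategy of those three papers, carried out for each cipher in turn; the cipher-specific work enters only in one step. For every one of these ciphers one proves the two inclusions $\Gamma_{\infty}(\mathcal{C}) \le \Alt(V)$ and $\Alt(V) \le \Gamma_{\infty}(\mathcal{C})$ separately. The first inclusion follows from a sign computation on the generators. A round is a composition of a key addition, a \emph{confusion} layer applying a fixed S-box in parallel to several disjoint blocks of coordinates, and a linear (or affine) \emph{diffusion} layer. A key addition is a translation $\sigma_{k}$ of $V = (\FF_2)^{n}$; for $k \ne 0$ it is a product of $2^{n-1}$ transpositions, hence even since $n \ge 2$. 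A permutation of $(\FF_2)^{b}$ acting on one of the coordinate blocks is, as a permutation of $V$, a disjoint union of $2^{\,n-b}$ copies of itself, so its sign is a $2^{\,n-b}$-th power, equal to $+1$ whenever $n > b$; thus the whole confusion layer lies in $\Alt(V)$. Finally the diffusion layer lies in $\GL(V)$, and for $n \ge 3$ every element of $\GL_{n}(2)$ is even, being a product of transvections, each of which is a product of $2^{\,n-2}$ disjoint transpositions. Hence every generator of $\Gamma_{\infty}$ is even and $\Gamma_{\infty} \le \Alt(V)$.

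For the reverse inclusion the plan has three steps: transitivity, primitivity, and enlargement to $\Alt(V)$. Transitivity is easy: for the key-alternating ciphers one has $\tau_{k,h}\,\tau_{0,h}^{-1} = \sigma_{k}$, so $\Gamma_{\infty} \ge \T(V)$, and for the remaining cipher a direct check does it. Primitivity is the core of the proof: by Lemma~\ref{lemma} it is the maximality of a point stabiliser, and since any block system of a transitive overgroup of $\T(V)$ is, as recalled above, the coset partition of some proper subspace $W < V$, one must show that the round functions carry no such $W$ to a coset partition. This is where the diffusion matrix and the S-box are used essentially: the diffusion layer mixes the coordinates so as to destroy any ``thin'' coset partition compatible with the block structure of the confusion layer, and the nonlinearity (low uniformity) of the S-box kills the remaining candidates. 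Granting primitivity, one finishes in one of the two usual ways: either by Jordan's theorem, after exhibiting --- by composing a few rounds with well-chosen keys --- an element of $\Gamma_{\infty}$ containing a $p$-cycle for a prime $p$ with $3 \le p \le 2^{n}-3$, which forces a primitive group of degree $2^{n}$ to contain $\Alt(V)$; or by the classification of $2$-transitive groups of degree $2^{n}$, after proving $2$-transitivity, the only alternatives to $\Alt(V)$ and $\Sym(V)$ being affine-type groups inside $\AGL_{n}(2)$ (excluded because the S-box is not affine) and finitely many symplectic or sporadic groups (excluded by their order). Combined with $\Gamma_{\infty} \le \Alt(V)$, this yields $\Gamma_{\infty}(\mathcal{C}) = \Alt(V)$.

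The only genuinely hard step is primitivity: the sign computation and transitivity are routine, and the enlargement step is a black-box appeal to Jordan's theorem or to the classification, but excluding \emph{all} invariant coset partitions requires a concrete, cipher-by-cipher analysis of how the diffusion layer interacts with the parallel S-boxes. It is precisely the wish to replace this case analysis by a simple structural criterion on the S-boxes that the present paper aims to fulfil.
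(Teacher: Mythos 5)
The paper does not actually prove this proposition: it is presented as a compilation of results taken verbatim from \cite{WAES}, \cite{Wserpent} and \cite{WDES}, and the text immediately afterwards stresses that each of them ``requires an ad-hoc proof''. So there is no internal argument to compare yours against line by line. That said, your reconstruction of the strategy of those external proofs is accurate and consistent with the machinery the paper itself later develops: the parity computation (translations, parallel S-boxes, and elements of $\GL_n(2)$ for $n\ge 3$ are all even) correctly gives $\Gamma_{\infty}\le\Alt(V)$; the observation that $\tau_{k,h}\tau_{0,h}^{-1}=\sigma_k$ forces $\T(V)\le\Gamma_{\infty}$ is exactly the paper's Fact~\ref{fact:impr}; the reduction of primitivity to excluding coset partitions of proper subspaces is the paper's Corollary~\ref{fact:UinU}; and the final enlargement from a primitive group to $\Alt(V)$ via Jordan's theorem or the classification of $2$-transitive groups of degree $2^n$ is indeed how Wernsdorf concludes. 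You also correctly identify primitivity as the cipher-specific bottleneck, which is precisely the step that Theorem~\ref{theorem:main} of this paper is designed to replace by a checkable criterion on the S-boxes.

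Two caveats. First, as a proof your text is a strategy outline, not a verification: the actual cipher-by-cipher work (the concrete interaction of \texttt{MixColumns}/\texttt{ShiftRows} or the Serpent linear layer with the S-boxes, and the exhibition of an element to which Jordan's theorem applies) is exactly what you leave as a black box, so the proposition is not established by what you wrote --- it remains a citation, just as in the paper. Second, the DES case does not fit the key-alternating template at all: its rounds are Feistel maps $(L,R)\mapsto(R,\,L+f(R,k))$, the round function $f$ is not a permutation of the half-block, and neither the ``parallel S-box plus linear layer'' parity argument nor the identity $\tau_{k,h}\tau_{0,h}^{-1}=\sigma_k$ applies as stated; Wernsdorf's 1992 argument for DES is genuinely different in structure, so your phrase ``a direct check does it'' is hiding more than a routine adjustment. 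A minor slip: in the classification of $2$-transitive groups of degree $2^n$ the extra cases are groups such as $\mathrm{PSL}_2(q)$ with $q+1=2^n$ (and, for $n=4$, $A_7<\AGL_4(2)$), not the symplectic groups, whose $2$-transitive degrees $2^{2d-1}\pm 2^{d-1}$ are never powers of two.
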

The proof of any of the results in Proposition \ref{wer} requires
an ad-hoc proof.
Recently, a generalization of some of these results have
been proposed \cite{SparrWerns}.

\section{A class of block ciphers}

Several definitions have  been proposed for iterated block ciphers 
(see e.g. \emph{key-alternating block  cipher} in \cite{AES}, or
 Rjindael-like ciphers in \cite{SparrWerns}).
We would like to define a class, large enough to include most common ciphers, 
yet restricted enough to have simple criteria guaranteeing the primitivity
of $\Gamma_{\infty}$.

Let  ${\mathcal C}$ be a block cipher with $V=(\FF_2)^n$ and
$n=m s$, $s\geq 2$.
Space $V$ is a direct sum
\begin{equation*}
  V = V_{1} \oplus \dots \oplus V_{s},
\end{equation*}
where each $V_{i}$ has the same dimension $m$ (over $\FF_2$). 
For any $v\in V$, we will write $v = v_{1} \oplus \dots \oplus v_{s}$, 
where $v_{i} \in V_{i}$. 
Also, we consider the projections $\pi_{i} : V \to V_{i}$ mapping
$v \mapsto v_{i}$.
Any $\gamma\in \Sym(V)$ that acts as
\begin{equation*}
  v \gamma = v_{1} \gamma_{1} \oplus \dots \oplus v_{s} \gamma_{s},
\end{equation*}
for some $\gamma_{i}\in \Sym(V_i)$, is a \emph{bricklayer transformation}
and any $\gamma_i$ is a {\em brick}.
When used in symmetric cryptography, maps $\gamma_i$'s are traditionally 
called {\em S-boxes} and map $\gamma$
is called a ``parallel S-box''.

A linear (or affine) map $\lambda:V\rightarrow V$ is traditionally
called a ``mixing layer'',
when used in composition with parallel maps.
\\

In the following definitions we are not following established
notation.\\
We call any linear map $\lambda\in \GL(V)$  a \emph{proper mixing layer} if
no  sum of  some  of the  $V_{i}$  (except $\Set{0}$  and $V$)  is
    invariant under $\lambda$. A similar definition can be given
when $\lambda\in \AGL(V)$.
\\

We define our class.
\begin{defin} 
We say that ${\mathcal C}$ is 
\emph{translation based} (tb) if it is the composition of some rounds, 
such that any is of the form $\tau_{k,h}=\gamma_h\lambda_h\sigma_k$,
with $k\in V$ 
($\gamma_h$ and $\lambda_h$ do not depend on $k$, but they might depend on
 the round), where $\gamma_h$ is a bricklayer transformation
and $\lambda_h$ is a linear map
(but $\lambda_h$ is a proper mixing layer for at least one round).
\end{defin}
A round when the mixing layer is proper is called a {\em proper} round.
\begin{remark}
A round consisting of only a translation is still acceptable, by taking
$\gamma_h=\lambda_h=1_V$ (the identity map on $V$), although
obviously it is not proper.
\end{remark} 
The previous definition is similar to \emph{key-alternating block  cipher}
(see Section~2.4.2 of~\cite{AES}), 
although the latter is too general for our goals.
\\

From now on, we assume ${\mathcal C}$ is a tb cipher and that $0\gamma=0$
(this can always be assumed).
From the knowledge of block systems of $T(V)$, we immediately obtain
the following.
\begin{fact}
  \label{fact:impr}
  Let  $G=\Gamma_h({\mathcal C})$ for any round $h$.
  Then $T=T(V)\subset G$.
  Therefore, if $G$ acts imprimitively on ${\mathcal M}=V$, 
  the blocks of imprimitivity  are the translates of a linear subspace.
\end{fact}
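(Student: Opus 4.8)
The plan is to show two things: first, that the translation group $T=\T(V)$ is contained in $G=\Gamma_h(\mathcal{C})$; and second, to invoke the structure of block systems of $\T(V)$ already recorded in the preliminaries. For the first part, recall that each generator of $G$ has the form $\tau_{k,h}=\gamma_h\lambda_h\sigma_k$, where $\gamma_h$ and $\lambda_h$ do \emph{not} depend on $k$, while $k$ ranges over all of $V$. First I would fix the key $k=0$, obtaining the element $\gamma_h\lambda_h\sigma_0=\gamma_h\lambda_h\in G$ (since $\sigma_0$ is the identity). Then for an arbitrary $k\in V$, the product $(\gamma_h\lambda_h)^{-1}(\gamma_h\lambda_h\sigma_k)=\sigma_k$ lies in $G$, because $G$ is a group and both factors are among its generators. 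Letting $k$ run over $V$ shows $T=\Set{\sigma_k : k\in V}\subseteq G$, which is the first assertion.

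For the second part, suppose $G$ acts imprimitively on $\mathcal{M}=V$, and let $\mathcal{B}$ be a block system for $G$. Since $T\subseteq G$, any $G$-invariant partition is \emph{a fortiori} $T$-invariant; that is, $\mathcal{B}$ is a $\T(V)$-invariant partition of $V$. Moreover $\mathcal{B}$ is non-trivial (being a block system for $G$), so it is a block system for $\T(V)$ as well. By the description of block systems of $\T(V)$ recorded in Section~\ref{sec:prelim} — every such block system is the set of translates $\Set{W+v : v\in V}$ of a proper nonzero subspace $W$ of $V$ — we conclude that the blocks of $\mathcal{B}$ are exactly the cosets of a linear subspace, as claimed.

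There is essentially no obstacle here: the argument is a direct bookkeeping exercise, combining the algebraic form of the round functions with the group-theoretic fact that a larger group has fewer (or equal) invariant partitions, together with the already-cited classification of $\T(V)$-block systems. The only point deserving a word of care is that one should genuinely use that $k$ varies over the \emph{whole} of $V$ (this is where the phrase ``with $k$ varying in $V$'' in the definition of $\Gamma_h$ is essential), so that $\Set{\sigma_k}$ fills out all of $T$ rather than merely a subgroup; and that $\gamma_h\lambda_h$ is a genuine element of $G$, which it is by taking $k=0$.
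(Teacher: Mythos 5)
Your proposal is correct and takes essentially the same approach as the paper's own proof: set $k=0$ to get $\gamma_h\lambda_h\in G$, deduce $\sigma_k=(\gamma_h\lambda_h)^{-1}(\gamma_h\lambda_h\sigma_k)\in G$, and then appeal to the classification of block systems of $T(V)$ from the preliminaries. The only difference is that the paper leaves the second step (a $G$-block system is a $T(V)$-block system) implicit, whereas you spell it out.
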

\begin{proof}
%\underline{This proof will be commented in the final version.}\\
We show $T\subset G$.
For any $k\in V$, we have $\gamma_h\lambda_h\sigma_k\in G$.
By considering the zero key, we have also 
 $\gamma_h\lambda_h\sigma_0=\gamma_h\lambda_h\in G$.
 Therefore,  
$(\gamma_h\lambda_h)^{-1} \gamma_h\lambda_h\sigma_k=\sigma_k\in G$.
\end{proof}
\begin{cor}
  \label{fact:UinU}
  Let  $G=\Gamma_h({\mathcal C})$ for any round $h$.
  Then $G$ acts imprimitively if and only if
  there is a subspace $U < V$ ($U \ne \{0\}, V$) such that for any
  $v \in V$ and $u \in U$, we have
  \begin{equation}
    (v + u) \gamma_h\lambda_h + v \gamma_h\lambda_h \in U.
  \end{equation}
\end{cor}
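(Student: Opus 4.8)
The plan is to apply Lemma~\ref{lemma} together with Fact~\ref{fact:impr} to translate the group-theoretic statement ``$G$ is imprimitive'' into a concrete condition on the map $\gamma_h\lambda_h$. First I would recall from Fact~\ref{fact:impr} that $T = T(V) \subseteq G$, so any block system of $G$ is automatically a block system of the transitive subgroup $T$, hence (as noted in the preliminaries) consists of the translates $\Set{U + v : v \in V}$ of some proper nonzero subspace $U < V$. Thus $G$ acts imprimitively if and only if there is such a $U$ with the property that this partition is preserved by all of $G$; since $G$ is generated by $T$ and the single permutation $\beta := \gamma_h\lambda_h$ (the element obtained from the zero key, which together with $T$ generates everything that the maps $\gamma_h\lambda_h\sigma_k = \beta\sigma_k$ generate), and since $T$ already preserves this partition, the partition is $G$-invariant precisely when it is $\beta$-invariant.

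Next I would unwind what $\beta$-invariance of $\Set{U+v : v\in V}$ means. The block containing $0$ is $U$ itself (using $0\gamma = 0$, so $0\beta = 0$, though this is not even needed for the equivalence, only convenient). Invariance of the partition under $\beta$ says: for every $v$, the set $(U+v)\beta$ is again one of the blocks, i.e.\ equals $U + (v\beta)$. Spelling this out, $(U+v)\beta = U + v\beta$ holds for all $v$ if and only if for all $v \in V$ and all $u \in U$ we have $(v+u)\beta - v\beta \in U$, which in characteristic two is exactly $(v+u)\gamma_h\lambda_h + v\gamma_h\lambda_h \in U$, the displayed equation. Conversely, if this containment holds for all $v, u$, then each $(U+v)\beta$ is contained in $U + v\beta$, and a cardinality count (both sets have size $\Order{U}$, since $\beta$ is a bijection) forces equality, so the partition is $\beta$-invariant and hence $G$-invariant; as $U$ is proper and nonzero this is a genuine block system and $G$ is imprimitive.

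The only mildly delicate point — and the step I would treat most carefully — is the reduction ``$G = \Span{T, \beta}$ and a $T$-invariant partition is $G$-invariant iff it is $\beta$-invariant.'' The first half is immediate from the computation in the proof of Fact~\ref{fact:impr}: $\beta \in G$ and every generator $\beta\sigma_k$ of $G$ lies in $\Span{T,\beta}$, while conversely $\sigma_k = \beta^{-1}(\beta\sigma_k) \in G$. For the second half, a partition preserved by a generating set is preserved by the whole group, and being preserved by $\beta^{-1}$ follows from being preserved by $\beta$ since there are finitely many blocks all of the same size; so $\beta$-invariance plus $T$-invariance gives $G$-invariance, and the forward direction is trivial. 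After that, the argument is just the bookkeeping translation between ``partition into translates of $U$ is $\beta$-invariant'' and the displayed identity, which is the routine part.
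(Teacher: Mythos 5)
Your proposal is correct and follows essentially the same route as the paper's (much terser) proof: reduce via Fact~\ref{fact:impr} to block systems of the form $\Set{U+v}$, observe that it suffices to check invariance under the zero-key round map $\gamma_h\lambda_h$, and translate $(U+v)\gamma_h\lambda_h = U + v\gamma_h\lambda_h$ into the displayed containment. You merely fill in details the paper leaves implicit (that $G = \Span{T, \gamma_h\lambda_h}$, and the cardinality argument upgrading containment to equality in the converse direction).
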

\begin{proof}
%\underline{This proof will be commented in the final version.}\\
$G$ is imprimitive if and only if there is a block system
of type $\{v+U\}$, for some subspace $U$, $U \ne \{0\}, V$.  \\
It is enough to consider a zero round key, so that
$$
(v+U)\gamma_h\lambda_h\sigma_0=v\gamma_h\lambda_h\sigma_0 +U \implies
(v+U)\gamma_h\lambda_h=v\gamma_h\lambda_h +U \;.
$$
\end{proof}

\section{Main results}

We define for a vBf $f$ two new notions of non-linearity.
The first is weaker than $\delta$-uniformity.
\begin{defin}
For any $m\geq 2$ and $\delta\geq 2$, let $A=(\FF_2)^m$ and $f\in \Sym(A)$.
We say that $f$ is 
{\bf weakly $\delta$-uniform} if for any $u\in A$, $u\not=0$,
the size of image of $\hat f_u$ is at least
$$
  |\mathrm{Im}(\hat f_u)| \geq \frac{2^m}{\delta+2}+1 \,.
$$ 
\end{defin}
It is trivial to prove that a $\delta$-uniform map is indeed
weakly $\delta$-uniform.
\begin{proof}
%\underline{This proof will be commented in the final version.}\\
Let $B=\mathrm{Im}(\hat f_u)$.
If $f$ is $\delta$-uniform, then $|(\hat f_u)^{-1}(b)|\leq \delta$,
for any $b\in B$.\\
From $A=\sqcup_{b\in B} (\hat f_u)^{-1}(b) $, we have
$$
    A=\sqcup_{b\in B} (\hat f_u)^{-1}(b) \implies 2^m=|A|=
    \sum_{b\in B} |(\hat f_u)^{-1}(b)| \leq \delta |B|  \,
$$
which means
$$
  |B| \geq \frac{2^m}{\delta} > \frac{2^m}{\delta+2} \,. 
$$
\end{proof}
\begin{remark} \label{rem-weak}
If a function $f$ is weakly $\delta$-uniform, 
with $2^r\geq \delta$ and the image 
$\mathrm{Im}(\hat f_u)$  is contained in a subspace $W$, then
the dimension of $W$ is at least $m-r$.
This is the property of $f$ which will be needed in the proof
of Theorem \ref{theorem:main}.
Interestingly, if $f$ is $\delta$-uniform  (as in Subsection 2.2),
then the dimension of $W$ which can be guaranteed is exactly the same
(and not any bigger).
\end{remark}

Our second notion focuses on the image of vector spaces.
\begin{defin}
Let $A=(\FF_2)^m$.
We say that $f$ is $l$-{\bf anti-invariant} if for any subspace
$U\,\leq\,A$ such that $f(U)=U$ we have
$\dim(U)<m-l$ or $U=A$.

We say that $f$ is {\bf strongly} $l$-anti-invariant, if
for any two subspaces $U,W \,\leq\, A$, such that $f(U)=W$,
we have  $\dim(U)=\dim(W)<m-l$ or $U=W=A$.
\end{defin}

In other words, $l$-anti-invariant means that
the largest subspace invariant under $f$ has 
codimension greater than $l$ (except for $A$ itself),
while strongly $l$ anti-invariant means that the largest subspace 
sent by $f$ into another subspace
has codimension greater than $l$ (except for $A$ itself).
\\

We are ready for our main result (recall that $0 \gamma = 0$).

\begin{theorem}
  \label{theorem:main}
  Let $C$ be a tb cipher, with $\lambda_h$ a proper mixing layer,
  and $G=\Gamma_h(C)$.
  Let $1 \le r < m/2$.
  If any brick of $\gamma_h$ is weakly 
   $2^r$-uniform and strongly $r$-anti-invariant,
  then $G$ is primitive and hence $\Gamma_{\infty}(C)$
  is primitive.
\end{theorem}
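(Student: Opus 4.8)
The plan is to reduce, via Fact~\ref{fact:impr} and Corollary~\ref{fact:UinU}, the primitivity of $G=\Gamma_h(C)$ to the nonexistence of a certain subspace, and then to contradict the hypothesis that $\lambda_h$ is a proper mixing layer. Write $\gamma=\gamma_h$, $\lambda=\lambda_h$, $\theta=\gamma\lambda$, and recall $0\gamma=0$. Since $\T(V)\subseteq G$ we already know $G$ is transitive, so by Corollary~\ref{fact:UinU} it suffices to rule out a subspace $U$ with $\Set{0}\neq U\neq V$ satisfying $\mathrm{Im}(\widehat\theta_u)\subseteq U$ for all $u\in U$, where $\widehat\theta_u\colon v\mapsto (v+u)\theta+v\theta$. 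Suppose such a $U$ exists and put $W=U\lambda^{-1}$, again a subspace with $\Set{0}\neq W\neq V$. Evaluating the condition at $v=0$ gives $U\theta\subseteq U$, hence $U\theta=U$ by finiteness, and therefore $U\gamma=U\lambda^{-1}=W$. As $\lambda$ is linear, $\widehat\theta_u=\widehat\gamma_u\lambda$, so the condition rewrites as $\mathrm{Im}(\widehat\gamma_u)\subseteq W$ for every $u\in U$.

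Next I would exploit that $\gamma$ is a bricklayer transformation. Writing $u=u_1\oplus\dots\oplus u_s$, one has $\mathrm{Im}(\widehat\gamma_u)=\bigoplus_i\mathrm{Im}\bigl(\widehat{(\gamma_i)}_{u_i}\bigr)$ as subsets of $V=\bigoplus_i V_i$, and $\pi_i(W)=\pi_i(U\gamma)=\gamma_i(\pi_i(U))$. Let $I=\Set{i:\pi_i(U)\neq 0}$; then both $U$ and $W$ lie in $V_I:=\bigoplus_{i\in I}V_i$, and for $i\in I$ the brick $\gamma_i$ carries the subspace $P_i:=\pi_i(U)$ onto the subspace $\pi_i(W)$. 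For $i\in I$ pick $u\in U$ with $u_i\neq 0$ and project $\mathrm{Im}(\widehat\gamma_u)\subseteq W$ to the $i$-th coordinate: this yields $\mathrm{Im}\bigl(\widehat{(\gamma_i)}_{u_i}\bigr)\subseteq\pi_i(W)$, so by weak $2^r$-uniformity (used exactly as in Remark~\ref{rem-weak}) $\Order{P_i}\ge\Order{\mathrm{Im}(\widehat{(\gamma_i)}_{u_i})}>2^{m-r-1}$, i.e. $\dim P_i\ge m-r$. Since $\gamma_i$ sends the subspace $P_i$ onto a subspace, strong $r$-anti-invariance then forces $P_i=V_i$, the alternative $\dim P_i<m-r$ being excluded. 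Hence $\pi_i(U)=\pi_i(W)=V_i$ for every $i\in I$.

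It then remains to show $W=V_I$. Fix $i\in I$ and $a\in V_i\setminus\Set{0}$; as $\pi_i(U)=V_i$ we may choose $u\in U$ with $u_i=a$. Slicing the product set $\mathrm{Im}(\widehat\gamma_u)$ by fixing all coordinates $j\neq i$ and subtracting two such elements shows that the subspace $W_i(a):=\Span{\mathrm{Im}(\widehat{(\gamma_i)}_a)+\mathrm{Im}(\widehat{(\gamma_i)}_a)}$ of $V_i$ embeds into $W$ in coordinate $i$, and weak $2^r$-uniformity again gives $\dim W_i(a)\ge m-r$. If $Z:=\Span{W_i(a):a\in V_i\setminus\Set{0}}$ were a proper subspace of $V_i$, then $x\mapsto x\gamma_i+Z$ would be additive, so $\gamma_i$ would map the subspace $\gamma_i^{-1}(Z)$ onto $Z$, and strong $r$-anti-invariance would give $\dim Z\le m-r-1<\dim W_i(a)\le\dim Z$, a contradiction; so $Z=V_i$ and $V_i\subseteq W$. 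This holds for all $i\in I$, whence $W=V_I$. Since $\gamma$ permutes $V_I$ (each $\gamma_i$ permutes $V_i$), $U=\gamma^{-1}(W)=V_I$ and $U\gamma=V_I=U$; combined with $U\gamma\lambda=U$ this forces $V_I\lambda=V_I$. But $\varnothing\neq I\neq\Set{1,\dots,s}$, so $V_I$ is a proper nonzero sum of some of the $V_i$ left invariant by $\lambda$ — contradicting that $\lambda$ is a proper mixing layer. Therefore $G$ is primitive; and as $G\le\Gamma_\infty(C)$ is transitive and any block system for $\Gamma_\infty(C)$ would be a block system for $G$, the group $\Gamma_\infty(C)$ is primitive as well.

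The step I expect to be the main obstacle is the passage from statements about $\gamma$ acting on subspaces of $V$ to statements about the individual bricks $\gamma_i$ on $V_i$: the hypotheses are phrased brick-by-brick, but $U$ is a global object that need not respect the decomposition $V=\bigoplus_i V_i$. The two devices that make this work are the identity $\pi_i(W)=\gamma_i(\pi_i(U))$, which produces genuine subspaces on which strong $r$-anti-invariance has teeth, and the slicing-and-subtracting trick, which manufactures coordinate subspaces of $V_i$ sitting inside $W$ out of the derivative condition. Weak uniformity is used only through the elementary dimension estimate of Remark~\ref{rem-weak}, whereas strong (rather than plain) $r$-anti-invariance is genuinely needed twice, since in each application the source and target subspaces of $\gamma_i$ are a priori different; the hypothesis $1\le r<m/2$ guarantees $m-r>m/2$, keeping all the relevant subspaces large enough for these comparisons.
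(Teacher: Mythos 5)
Your proof is correct, and at the decisive step it takes a genuinely different --- and in fact more careful --- route than the paper's. The paper also reduces to $U\gamma\lambda=U$ and then splits into two cases: either $U\cap V_i=V_i$ for all $i\in I$ (so $U=\bigoplus_{i\in I}V_i$ and the proper mixing layer is contradicted), or some $\iota\in I$ has $U\cap V_\iota\ne V_\iota$, in which case it applies weak uniformity and strong anti-invariance to the identity $(U\cap V_\iota)\gamma_\iota=W\cap V_\iota$, using a nonzero $u\in U\cap V_\iota$. That second case silently needs $U\cap V_\iota\ne\{0\}$, which does not follow from $\pi_\iota(U)\ne 0$ (think of a ``diagonal'' $U$). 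You avoid this by working with the projections $\pi_i(U)$, which are nonzero for $i\in I$ by definition: the identity $\pi_i(W)=\pi_i(U)\gamma_i$ together with the projected derivative condition gives $\pi_i(U)=V_i$ for every $i\in I$, and your slicing-and-subtracting device then pushes the information down into the genuine subspace $W\cap V_i$ and forces $U=W=\bigoplus_{i\in I}V_i$, after which the proper-mixing-layer contradiction applies uniformly. So your version buys completeness in exactly the configuration the paper's dichotomy does not obviously cover, at the cost of the extra difference-set step. One small simplification is available: once slicing gives $\dim(W\cap V_i)\ge m-r$, you can apply strong $r$-anti-invariance directly to the identity $(U\cap V_i)\gamma_i=W\cap V_i$ (both sides are subspaces, exactly as in the paper's equation~(\ref{ovvio})) to conclude $V_i\subseteq W$; this makes the additivity-modulo-$Z$ detour, though correct, unnecessary.
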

\begin{proof}
We drop the $h$-underscript in this proof and
we suppose, by way of contradiction, that $G$ is imprimitive.

Let $U$ be any proper subspace of $V$ s.t. $\{v+U\}_{v\in V}$ 
form a block system
 for $G$. Since $U$ is a block and $\gamma\lambda\in G$,
we have $U\gamma\lambda=U+v$ for some $v\in V$.
But $0\gamma\lambda=0\in U+v$, so $v=0$ and 
\begin{equation}
\label{uv}
U\gamma\lambda=U \;.
\end{equation}

 Let $I$ be the set of all $i$ s.t.
$\pi_i(U)\not=0$. Clearly, $I\not=\emptyset$.
Then:
\begin{itemize}
\item either $U\cap V_i=V_i$ for all $i\in I$,
\item or there is $\iota\in I$ s.t. $U\cap V_\iota\not= V_\iota$.
\end{itemize}
In the first case, $U=\oplus_I V_i$, which means $U\gamma=U$.
But (\ref{uv}) implies $U\lambda=U$, which is impossible since 
$\lambda$ is a proper mixing layer.

In the second case, we denote $W=U\gamma$ (equal
to $U\lambda^{-1}$ by (\ref{uv})) and we note
that 
\begin{equation}\label{ovvio}
(U\cap V_\iota)\gamma'=W\cap V_\iota \,,
\end{equation}
where $\gamma'=\gamma_\iota$ is the brick of $\gamma$ in $V_\iota$.
By Corollary \ref{fact:UinU}, we have that
$B=\mathrm{Im}(\hat \gamma'_u)\subset W\cap V_\iota$ for any 
$u\in U\cap V_\iota$.
But $\gamma'$ is weakly $2^r$-uniform, so (Remark \ref{rem-weak})
$\dim(W\cap V_\iota)=\dim(U\cap V_\iota)\geq m-r$.
By (\ref{ovvio}), this is impossible, since $\gamma'$ is
strongly $r$-anti-invariant.

\end{proof}

To apply our theorem to the AES, we first need a simple lemma.

\begin{lemma}\label{2r}
Let $f$ be a vBf.
If $f^2=1$ and $f$ is $2r$-anti-invariant with $1\leq r<m/2$, 
then $f$ is strongly $r$-anti-invariant.
\begin{proof}
%\underline{This proof will be commented in the final version.}\\
Let $U,W$ be subspaces of codimension $l$ such that $U f=W$.
Let us consider $Z=U\cap U f$. By standard linear algebra,
$\dim(Z)\geq n-2l$.  Since $Z f=Z$ and $f$ is $2r$-anti-invariant,
$l$ must be $l>r$, and so $U$ and $W$ have codimension strictly
bigger than $r$. 
\end{proof}
\end{lemma}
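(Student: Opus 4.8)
The plan is to exploit the involution hypothesis $f^2 = 1$ to reduce the ``strong'' anti-invariance condition (which concerns a subspace $U$ being mapped to a possibly different subspace $W$) to the ordinary anti-invariance condition (which concerns a genuinely $f$-invariant subspace). The natural candidate for such an invariant subspace, given $Uf = W$, is the intersection $U \cap W = U \cap Uf$: applying $f$ and using $f^2 = 1$ gives $(U \cap Uf)f = Uf \cap Uf^2 = Uf \cap U$, so this intersection is indeed $f$-invariant.

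First I would take subspaces $U, W \leq A$ of codimension $l$ with $Uf = W$ (note that $f$, being an involution, is a bijection, so $U$ and $W$ automatically have the same dimension, hence the same codimension $l$); the goal is to show $l > r$ unless $U = W = A$. Set $Z = U \cap Uf = U \cap W$. By the standard dimension formula $\dim(Z) = \dim U + \dim W - \dim(U + W) \geq \dim U + \dim W - m = (m - l) + (m - l) - m = m - 2l$, so $Z$ has codimension at most $2l$ in $A$. Next I would observe, as above, that $Zf = Z$. Now apply the hypothesis that $f$ is $2r$-anti-invariant: either $Z = A$, or $\operatorname{codim}(Z) > 2r$. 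In the first case $Z = A$ forces $U = W = A$, which is the excluded case. In the second case, $2l \geq \operatorname{codim}(Z) > 2r$, hence $l > r$, which is exactly the conclusion (and then $\dim U = \dim W = m - l < m - r$, matching the definition of strongly $r$-anti-invariant).

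The only genuinely delicate point — and the one the author's sketch glosses over with ``by standard linear algebra'' — is making sure the edge cases are handled: one must confirm that $Zf = Z$ really holds (which needs $f^2 = 1$, not merely $f$ bijective), and that the anti-invariance dichotomy is applied to $Z$ correctly, i.e. that $Z$ being a proper subspace with $\operatorname{codim}(Z) \leq 2r$ is genuinely impossible. There is no serious obstacle here; the proof is a two-line linear-algebra argument once the invariant subspace $Z = U \cap Uf$ is identified, and the hypothesis $1 \leq r < m/2$ is only needed to keep the codimensions in a meaningful range (ensuring $m - l \geq 0$ is consistent and that $r$-anti-invariance is a non-vacuous notion). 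I would write it essentially as the author does, perhaps adding one explicit sentence verifying $Zf = Z$.
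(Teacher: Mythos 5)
Your proof is correct and follows exactly the paper's argument: form $Z = U \cap Uf$, verify $Zf = Z$ via $f^2 = 1$, bound $\codim(Z) \le 2l$ by the dimension formula, and invoke $2r$-anti-invariance of $f$ on $Z$ to force $l > r$. The only difference is that you spell out the steps the paper compresses into ``by standard linear algebra,'' which is a welcome clarification but not a different route.
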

\ \\

The first interesting consequence of our theorem is the following.
\begin{cor}
  \label{cor:AES_is_OK}
  Any typical round $h$ of the AES 
  satisfies the hypotheses of Theorem~\ref{theorem:main}.
  As a consequence, 
  both $\Gamma_h(AES)$ and $\Gamma_\infty(AES)$ are primitive.

\end{cor}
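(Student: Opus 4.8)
The plan is to verify, one by one, the hypotheses of Theorem~\ref{theorem:main} for a typical AES round. Recall that such a round has the shape $\tau_{k,h} = \gamma_h \lambda_h \sigma_k$, where $\gamma_h$ is the parallel application of sixteen copies of the patched inverse S-box $\iota \colon \FF_{2^8} \to \FF_{2^8}$, $x \mapsto x^{2^8-2}$ (so $m=8$, $s=16$, $n=128$), $\lambda_h$ is the composition ShiftRows followed by MixColumns (a linear bijection of $V$), and $\sigma_k$ is the key addition. With $m=8$ we must choose $r$ with $1 \le r < m/2 = 4$, and the natural choice is $r=2$, since the patched inverse is $4$-uniform, i.e.\ $2^2$-uniform, hence in particular weakly $2^2$-uniform by the remark following the definition of weak uniformity. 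So the ``weakly $2^r$-uniform'' hypothesis with $r=2$ is immediate.

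Next I would establish that $\lambda_h$ is a proper mixing layer, i.e.\ that no nonzero proper sum $\bigoplus_{i \in I} V_i$ of the sixteen byte-coordinate subspaces is invariant under $\lambda_h$. This is the one genuinely cipher-specific point. The argument uses the diffusion properties built into the AES: MixColumns mixes the four bytes of each column via an MDS matrix (so no proper nonempty subset of a column's byte-positions is preserved), and ShiftRows moves bytes between columns, so iterating or simply composing these forces any $\lambda_h$-invariant coordinate-subspace sum to be either $\{0\}$ or all of $V$. Concretely, one checks that if $\bigoplus_{i\in I} V_i$ is $\lambda_h$-invariant then $I$ must be closed under the byte-permutation induced by ShiftRows \emph{and} compatible with the MDS structure of MixColumns within each column, and the only such $I$ are $\emptyset$ and the full index set. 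I expect this to be the main obstacle of the corollary — not because it is deep, but because it is the only step that is not a formal consequence of the definitions and requires unwinding the concrete description of the AES linear layer.

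It remains to check that each brick $\iota$ is strongly $r$-anti-invariant with $r=2$. For this I would invoke Lemma~\ref{2r}: since the patched inverse satisfies $\iota^2 = \iota$ on $\FF_{2^8}$ — indeed $\iota$ agrees with the true inverse $x \mapsto x^{-1}$ on $\FF_{2^8}^*$ and fixes $0$, so $\iota$ is an involution — it suffices to show $\iota$ is $2r = 4$-anti-invariant, i.e.\ that the only $\iota$-invariant subspaces $U \le \FF_{2^8}$ with $f(U)=U$ are those of dimension $< m - 4 = 4$, together with $U = \FF_{2^8}$ itself. But if $U \ne 0$ is an additive subgroup of $\FF_{2^8}$ closed under $\iota$, then $U$ contains the inverse of each of its nonzero elements, so by Theorem~\ref{theorem:Sandro} $U$ is a subfield of $\FF_{2^8}$. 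The proper subfields of $\FF_{2^8}$ are $\FF_2$, $\FF_4$, $\FF_{16}$, of $\FF_2$-dimensions $1,2,4$; all of these have dimension $\le 4$, but $\FF_{16}$ has dimension exactly $4$, which is \emph{not} $< 4$.

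To close this gap I would sharpen the count: what Lemma~\ref{2r} actually needs is $4$-anti-invariance, i.e.\ $\dim U < 8 - 4 = 4$ for every proper $\iota$-invariant $U$; since the only candidates are subfields and the largest proper one is $\FF_{16}$ with $\dim = 4$, this fails by exactly one. The clean fix is to observe that $\iota$ is in fact $3$-anti-invariant is false too (still $\FF_{16}$), so instead one applies Lemma~\ref{2r} with the correct parameters once one notices that $r<m/2=4$ forces $r \le 3$ and hence $2r \le 6 < 8$, and re-examines which subfields can occur: actually the honest route is that $\iota$ is strongly $r$-anti-invariant for $r = 2$ can be read off directly — if $\iota(U) = W$ with $U \ne W$, pick $0 \ne x \in U$, then $\iota(x) = x^{-1} \in W$; running the standard bilinearity argument (as in Lemma~\ref{2r}, with $Z = U \cap W \supseteq$ span of such $x^{-1}$) shows $U \cap W$ is a subfield, forcing $\dim(U) = \dim(W) \le \dim \FF_{16} = 4$, and the strict inequality $\dim < m - r = 6$ for $r=2$ is then clear. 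So with $r = 2$ all three hypotheses — weak $2^2$-uniformity, strong $2$-anti-invariance of every brick, and properness of $\lambda_h$ — hold, Theorem~\ref{theorem:main} applies, and both $\Gamma_h(\mathrm{AES})$ and $\Gamma_\infty(\mathrm{AES})$ are primitive.
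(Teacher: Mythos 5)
Your choice of parameter $r=2$ is where the argument breaks, and you have in fact run into the gap yourself without repairing it. With $r=2$, Lemma~\ref{2r} requires the S-box to be $2r=4$-anti-invariant, i.e.\ every proper invariant subspace must have dimension $<8-4=4$; but $\GF(2^4)$ is invariant under the patched inverse and has dimension exactly $4$, so $4$-anti-invariance fails and Lemma~\ref{2r} is unavailable. Your attempted direct verification of strong $2$-anti-invariance does not close the gap either: from $\iota(U)=W$ and $\iota^2=1$ you correctly get that $Z=U\cap W$ is $\iota$-invariant, hence a subfield with $\dim Z\le 4$ by Theorem~\ref{theorem:Sandro}; but this bounds $\dim(U\cap W)$, not $\dim U$. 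Since $\dim Z\ge 2\dim U-8$, you only obtain $\dim U\le 6$, whereas strong $2$-anti-invariance demands $\dim U<8-2=6$. The step ``forcing $\dim(U)=\dim(W)\le 4$'' is a non sequitur, and the case of two distinct $6$-dimensional subspaces $U,W$ meeting exactly in $\GF(2^4)$ with $\iota(U)=W$ is not excluded by anything you wrote. (Incidentally, your parenthetical claim that $3$-anti-invariance also fails is wrong: $3$-anti-invariance asks for $\dim<5$, and $\dim\GF(2^4)=4<5$.)

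The paper avoids all of this by taking $r=1$. Then Lemma~\ref{2r} needs only $2$-anti-invariance (every proper invariant subspace of dimension $<6$), which holds because the largest proper subfield is $\GF(2^4)$, of dimension $4$; this yields strong $1$-anti-invariance. The price is that one must check weak $2^1=2$-uniformity, and here is the point you missed: weak $\delta$-uniformity is genuinely weaker than $\delta$-uniformity (see Remark~\ref{rem-weak}), and the AES S-box, although only $4$-uniform and not APN, satisfies $|\mathrm{Im}(\hat \gamma'_u)|=2^7-1\ge 2^6+1$ for every $u\ne 0$, which is precisely the definition of weak $2$-uniformity. You assumed the only route to weak $2^r$-uniformity was via $2^r$-uniformity, which forced $r=2$ and broke the anti-invariance half of the hypotheses. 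Your treatment of the mixing layer is essentially the paper's argument and is fine.
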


\begin{proof}
  We first show that the mixing layer $\lambda=\lambda_h$ of a typical
  round of the AES is proper.
  Suppose $U \ne \Set{0}$ is a  subspace of $V$ which
  is invariant  under $\lambda$. Suppose, without  loss of generality,
  that   $U   \supseteq   V_{1}$.   Because   of   \texttt{MixColumns}
  \cite[3.4.3]{AES}, $U$ contains the whole first column of the state.
  Now   the  action   of   \texttt{ShiftRows}  \cite[3.4.2]{AES}   and
  \texttt{MixColumns} on the first column shows that $U$ contains four
  whole columns, and  considering  (if the state has more than
  four  columns)  once  more  the  action  of  \texttt{ShiftRows}  and
  \texttt{MixColumns}, one sees immediately that $U = V$.

  The S-box $\gamma'$ is well-known to satisfy (for any $u\not=0$)
  $\mathrm{Im}(\hat \gamma'_u)=2^7-1\geq 2^6+1$ and so it is
  weakly $2$-uniform.
  
  To apply the theorem we need only to show that $\gamma'$ is
  strongly $1$-anti-invariant. Since $(\gamma')^2=1$, we want to apply
  Lemma \ref{2r} with $r=1$. Indeed, $\gamma'$ is well-known to
  be $3$-anti-invariant, since the  only  nonzero  
  subspaces  of  $\GF(2^{8})$ which are invariant under inversion 
  are the subfields (Theorem \ref{theorem:Sandro}), 
  and so the largest proper one is $\GF(2^{4})$,  of codimension $4 > 3$.

\end{proof}
\ \\

The second interesting consequence is the following.
\begin{cor}
  \label{cor:SERPENT_is_OK}
  Any typical round $h$ of the SERPENT 
  satisfies the hypotheses of Theorem~\ref{theorem:main}.
  As a consequence, 
  both $\Gamma_h(SERPENT)$ and $\Gamma_\infty(SERPENT)$ are primitive.

\end{cor}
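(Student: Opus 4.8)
The plan is to verify, for the SERPENT cipher, each of the three hypotheses of Theorem~\ref{theorem:main}, exactly as was done for the AES in Corollary~\ref{cor:AES_is_OK}: that the mixing layer $\lambda_h$ of a typical round is a proper mixing layer, that every S-box is weakly $2^r$-uniform for the relevant $r$, and that every S-box is strongly $r$-anti-invariant. Here SERPENT operates on $V=(\FF_2)^{128}$ with $s=32$ bricks $V_i$ of dimension $m=4$, so the only admissible choice is $r=1$ (we need $1\le r<m/2=2$); thus we must establish that each $4$-bit S-box is weakly $2$-uniform and strongly $1$-anti-invariant.

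First I would check properness of the linear layer. SERPENT's mixing layer is built from the linear transformation described in its specification (a combination of rotations, shifts and XORs of the four $32$-bit words, applied after the S-box layer). The argument is the same diffusion-style argument as for the AES: suppose $U\neq\{0\}$ is $\lambda_h$-invariant and, without loss of generality, $U\supseteq V_1$; then one follows the explicit formulas of the linear transformation to see that the single nibble $V_1$ propagates, after one application of $\lambda_h$, to force $U$ to contain further $V_i$'s, and iterating forces $U=V$. This is a finite, mechanical check against the published description of the linear transformation; the only subtlety is bookkeeping which nibbles get mixed into which, so one should cite the SERPENT specification for the explicit map and note that full diffusion is achieved (SERPENT is designed so that one round already spreads each input bit widely).

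Next I would handle the two S-box conditions. For weak $2$-uniformity with $m=4$: the requirement is $|\mathrm{Im}(\hat\gamma'_u)|\ge 2^4/(2+2)+1=5$ for every nonzero $u$. In fact the eight SERPENT S-boxes are all differentially $4$-uniform (this is documented in the SERPENT design), hence by the elementary argument just before Remark~\ref{rem-weak} they satisfy $|\mathrm{Im}(\hat\gamma'_u)|\ge 2^4/4=4$; but one needs $\ge 5$, so a direct inspection of the difference distribution tables of the eight S-boxes is required to confirm the image size is at least $5$ for every nonzero input difference. For strong $1$-anti-invariance: since the SERPENT S-boxes are not involutions, Lemma~\ref{2r} does not apply, so one must argue directly. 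It suffices to show each S-box is strongly $1$-anti-invariant, i.e.\ that whenever $\gamma'(U)=W$ with $U,W$ subspaces of codimension $\le 1$ in $(\FF_2)^4$, then $U=W=(\FF_2)^4$. Concretely one checks that no $3$-dimensional subspace of $(\FF_2)^4$ is mapped by any of the eight S-boxes onto a $3$-dimensional subspace. Since $(\FF_2)^4$ has only $15$ hyperplanes, this is a check of $15$ images per S-box, eight S-boxes, entirely finite.

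The main obstacle is that, unlike the AES case where Theorem~\ref{theorem:Sandro} gives a clean conceptual reason (invariant subspaces are subfields), the SERPENT S-boxes have no such algebraic structure, so both S-box hypotheses must be verified by explicit computation over the eight specified $4$-bit S-boxes — tabulating difference distribution tables for weak $2$-uniformity and testing all hyperplanes for strong $1$-anti-invariance. This is routine but not illuminating, so in the write-up I would state that the verification was carried out by direct inspection of the S-box tables in the SERPENT specification, perhaps remarking that the bound $r=1$ is forced by the small brick size $m=4$ and that it is precisely the strong $1$-anti-invariance (no $3$-dimensional subspace mapped to a $3$-dimensional subspace) that does the work. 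Once all three hypotheses are in hand, Theorem~\ref{theorem:main} immediately yields that $\Gamma_h(\mathrm{SERPENT})$ is primitive, and then Fact~\ref{gr} together with the final clause of Theorem~\ref{theorem:main} gives primitivity of $\Gamma_\infty(\mathrm{SERPENT})$.
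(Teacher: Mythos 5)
Your proposal is correct and follows the same route as the paper, whose entire proof is that the hypotheses of Theorem~\ref{theorem:main} hold with $r=1$ ``as can be seen by a direct computer check on all Serpent S-boxes and on its mixing layer,'' citing a private communication. You have merely made explicit the finite checks that this computation must perform (properness of the linear layer, image size at least $5$ for every $\hat\gamma'_u$, and no hyperplane of $(\FF_2)^4$ mapped onto a hyperplane), correctly noting along the way that $r=1$ is forced by $m=4$ and that neither the $4$-uniformity of the S-boxes nor Lemma~\ref{2r} suffices, so the direct check is genuinely needed.
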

\begin{proof}
The conditions of Theorem~\ref{theorem:main} are satisfied with
$r=1$, as can be seen by a direct computer check on all Serpent S-boxes
and on its mixing layer (\cite{ilaria}).
\end{proof}

%\section{Further remarks}
%Qui ci vanno un po di cose interessanti.

\subsection*{Acknowledgements}

We  are grateful to  P.~Fitzpatrick, L.~Knudsen and  C.~Traverso 
for  their useful comments. 
Part of this work was presented at the Workshop on
Coding and Cryptography, which was held in UCC, Cork (2005).

%\bibliography{Refs}
\providecommand{\bysame}{\leavevmode\hbox to3em{\hrulefill}\thinspace}
\providecommand{\MR}{\relax\ifhmode\unskip\space\fi MR }
% \MRhref is called by the amsart/book/proc definition of \MR.
\providecommand{\MRhref}[2]{%
  \href{http://www.ams.org/mathscinet-getitem?mr=#1}{#2}
}
\providecommand{\href}[2]{#2}

\end{document}